\numberwithin{equation}{section}
\DeclareMathOperator{\id}{id}
\DeclareMathOperator{\Spin}{Spin}
\newcommand{\beq}[1]{\begin{equation} \label{#1}}
\newcommand{\eeq}{\end{equation}}
\newcommand{\bea}{\begin{eqnarray}}
\newcommand{\eea}{\end{eqnarray}} 
\newcommand{\Mod}[1]{\ (\mathrm{mod}\ #1)}
\begin{document}

\theoremstyle{plain}
\newtheorem{theorem}{Theorem}[section]
\newtheorem{thm}{Theorem}[section]
\newtheorem{lemma}[theorem]{Lemma}
\newtheorem{proposition}[theorem]{Proposition}
\newtheorem{prop}[theorem]{Proposition}
\newtheorem{corollary}[theorem]{Corollary}
\newtheorem{conjecture}[theorem]{Conjecture}
\newtheorem{question}[theorem]{Question}

\theoremstyle{definition}
\newtheorem{convention}[theorem]{Convention}
\newtheorem{definition}[theorem]{Definition}
\newtheorem{defn}[theorem]{Definition}
\newtheorem{example}[theorem]{Example}
\newtheorem{remark}[theorem]{Remark}
\newtheorem*{remark*}{Remark}
\newtheorem*{overview*}{Overview}
\newtheorem*{results*}{Results}
\newtheorem{rem}[theorem]{Remark}

\newcommand{\C}{\mathbb{C}}
\newcommand{\R}{\mathbb{R}}
\newcommand{\Z}{\mathbb{Z}}
\newcommand{\N}{\mathbb{N}}
\newcommand{\Q}{\mathbb{Q}}

\newcommand{\Supp}{{\rm Supp}}
\newcommand{\tn}{\textnormal}
\newcommand{\field}[1]{\mathbb{#1}}
\newcommand{\bZ}{\field{Z}}
\newcommand{\bR}{\field{R}}
\newcommand{\bC}{\field{C}}
\newcommand{\bN}{\field{N}}
\newcommand{\bT}{\field{T}}
\newcommand{\cB}{{\mathcal{B} }}
\newcommand{\cK}{{\mathcal{K} }}
\newcommand{\cF}{{\mathcal{F} }}
\newcommand{\cO}{{\mathcal{O} }}
\newcommand{\cE}{\mathcal{E}}
\newcommand{\cS}{\mathcal{S}}
\newcommand{\cN}{\mathcal{N}}
\newcommand{\calL}{\mathcal{L}}

\newcommand{\KK}{K \! K}

\newcommand{\norm}[1]{\| #1\|}

\newcommand{\Spinc}{\Spin^c}

\newcommand{\HH}{{\mathcal{H} }}
\newcommand{\Hpi}{\HH_{\pi}}

\newcommand{\DNR}{D_{N \times \R}}


\def\kt{\mathfrak{t}}
\def\kk{\mathfrak{k}}
\def\kp{\mathfrak{p}}
\def\kg{\mathfrak{g}}
\def\kh{\mathfrak{h}}
\def\so{\mathfrak{so}}
\def\cut{c}

\newcommand{\ddt}{\left. \frac{d}{dt}\right|_{t=0}}

\newcommand{\todo}{\textbf{TO DO}}

\title[Poincar\'{e} duality for actions by matrix groups]{An equivariant Poincar\'{e} duality for proper cocompact actions by matrix groups}

\author{Hao Guo}
\address[Hao Guo]{ Department of Mathematics, Texas A\&M University }
\email{haoguo@math.tamu.edu}

\author{Varghese Mathai}
\address[Varghese Mathai]{ School of Mathematical Sciences, University of Adelaide }
\email{mathai.varghese@adelaide.edu.au}

\subjclass[2020]{19K33, 19L47, 53C27}

\maketitle

\begin{abstract}
Let $G$ be a linear Lie group acting properly on a $G$-spin$^c$ manifold $M$ with compact quotient. We give a short proof that Poincar\'{e} duality holds between $G$-equivariant $K$-theory of $M$, defined using finite-dimensional $G$-vector bundles, and $G$-equivariant $K$-homology of $M$, defined through the geometric model of Baum and Douglas.
\end{abstract}
\vspace{1cm}
\section{Introduction}
\label{sec intro}
\noindent Poincar\'{e} duality in $K$-theory asserts that the $K$-theory group of a closed spin$^c$ manifold is naturally isomorphic to its $K$-homology group via cap product with the fundamental class in $K$-homology. This class can be represented geometrically by the spin$^c$-Dirac operator. More generally, if a compact Lie group acts on the manifold preserving the spin$^c$ structure, the analogous map implements Poincar\'{e} duality between the equivariant versions of $K$-theory and $K$-homology. In the case when the Lie group is non-compact but has finite component group, induction on $K$-theory and $K$-homology allow one to establish the analogous result \cite{GMW} for proper actions. The  observation underlying Poincar\'{e} duality in all of these cases is that there exist enough equivariant vector bundles with which to pair the fundamental class.

In contrast, Phillips \cite{Phillips} showed through a counter-example with a non-linear group that, for proper actions by a general Lie group $G$ on a manifold $X$ with compact quotient space, finite-dimensional vector bundles do not exhaust the $G$-equivariant $K$-theory of $X$, and that it is necessary to introduce infinite-dimensional vector bundles into the description of $K$-theory (see also \cite{LO}).

One case in which finite-dimensional bundles are sufficient is when the group $G$ is linear (see \cite{Phillips2}), owing to the key fact that in this case every $G$-equivariant vector bundle over $X$ is a direct summand of a $G$-equivariantly trivial bundle, ie. one that is isomorphic to $X\times V$ for some finite-dimensional representation of $G$ on $V$. 

Motivated by this, we give a short proof of Poincar\'{e} duality in this setting. That is, we show that the natural map from $G$-equivariant $K$-theory to $G$-equivariant $K$-homology  (which for us means Baum-Douglas' geometric $K$-homology \cite{BD}), given by pairing with the fundamental class, is an isomorphism for proper cocompact $G$-spin$^c$ manifolds where $G$ is a matrix group.
\begin{theorem}
\label{thm main}
Suppose a linear Lie group $G$ acts properly and cocompactly on a $G$-equivariantly spin$^c$ manifold $X$. Then there is a natural isomorphism
\begin{equation}
\label{eq main}	
\psi\colon K_G^*(X)\to K^{G}_*(X),
\end{equation}
where the left and right-hand sides denote $G$-equivariant $K$-theory and geometric $K$-homology respectively.
\end{theorem}
For compact Lie group actions, Theorem \ref{thm main} is implied by the work of \cite{BOSW} on the isomorphism between the equivariant geometric and analytic models of $K$-homology. Meanwhile, by the Peter-Weyl theorem, such groups form a subclass of linear Lie groups. Consequently, Theorem \ref{thm main} provides another approach to some of the results in \cite{BOSW}.

While Theorem \ref{thm main} makes no reference to the \emph{analytic} model of $K$-homology \cite{BCH,Kasparov}, we note that \eqref{eq main} still holds when the right-hand side is replaced by the analytic $K$-homology group $KK_*^G(C_0(X),\mathbb{C})$. Indeed, as a special case of Emerson-Meyer's general second duality result in \cite[Section 6]{EmersonMeyer}, (in particular, see the first display after (1.5) in \cite{EmersonMeyer}), we have
\begin{equation*}
KK_*^G(C_0(X),\mathbb{C})\cong K_G^*(TX).
\end{equation*}
By the Thom isomorphism from \cite[Theorem 8.11]{Phillips} (see also Theorem \ref{thm thom} below), we have 
\begin{equation*}
K_G^*(TX)\cong K^*_G(X).
\end{equation*}
Putting this together gives $K_G^*(X)\cong KK_*^{G}(C_0(X),\mathbb{C})$. Combined with Theorem \ref{thm main}, this gives:
\begin{corollary}
\label{cor cor1}
Suppose a linear Lie group $G$ acts properly and cocompactly on a $G$-equivariantly spin$^c$ manifold $X$. Then we have an isomorphism between the $G$-equivariant geometric and analytic $K$-homology groups of $X$:
\begin{equation}
\label{eq equivalence}
K^{G}_*(X)\cong KK_*^G(C_0(X),\mathbb{C}).
\end{equation}
In particular, the geometric $K$-homology groups $K_*^G$ are part of a $G$-equivariant extraordinary homology theory.
\end{corollary}
The natural map realizing this isomorphism \eqref{eq equivalence} is the Baum-Douglas map \cite{BD,GMW}, which can be described as follows. Let $(M,E,f)$ be a $G$-equivariant geometric $K$-cycle for $K_*^G(X)$ (see Definition \ref{def geometric} below). Then $M$ is a $G$-equivariantly spin$^c$ manifold with Dirac operator $D_M$ acting on sections of a spinor bundle $S_M$, $E$ is a $G$-equivariant vector bundle over $M$, and $f\colon M\to X$ is a $G$-equivariant continuous map. The Baum-Douglas map takes
$$[M,E,f]\mapsto f_*\left[L^2(S_M\otimes E),\varphi,D_E(1+D_E^2)^{-1/2}\right],$$
where $\varphi$ is the $*$-representation of $C_0(M)$ on $\mathcal{B}(L^2(E))$ given by pointwise multiplication, and the right-hand side is the pushforward under $f$ of a class in $KK_*^G(C_0(M),\mathbb{C})$. Corollary \ref{cor cor1} then implies:
\begin{corollary}
\label{cor cor2}
Suppose a linear Lie group $G$ acts properly and cocompactly on a $G$-equivariantly spin$^c$ manifold $X$. Then for any $e\in KK_*^G(C_0(X),\mathbb{C})$, there exist a $G$-equivariantly cocompact spin$^c$ manifold $M$ and a $G$-equivariant continuous map $f\colon M\to X$ such that $e$ is the pushforward under $f$ of the class of a Dirac-type operator on $M$ in $KK_*^G(C_0(M),\mathbb{C})$. More precisely, there exists a vector bundle $E\to M$ such that
$$e=f_*\left[L^2(S_M\otimes E),\varphi,D_E(1+D_E^2)^{-1/2}\right].$$
\end{corollary}

\vspace{0.2in}
\noindent{\em Acknowledgements.}
The authors would like to thank Peter Hochs and Hang Wang for their useful feedback on this paper.

Hao Guo was partially supported by funding from the Australian Research Council through the
Discovery Project DP200100729, and partially by the National Science Foundation through the NSF DMS-2000082.
Varghese Mathai was partially supported by funding from the Australian Research Council, through the
Australian Laureate Fellowship FL170100020.
\vspace{0.3in}
\section{Preliminaries}
\noindent We begin by recalling the definitions and facts we will need. Unless specified otherwise, $G$ will always denote a closed subgroup of $GL(n,\mathbb{R})$ for some $n$. All vector bundles will be complex. For this section, let $X$ be a locally compact proper $G$-space.  
\vspace{0.1in}
\subsection{Equivariant $K$-theory}
In \cite{Phillips2}, Phillips showed that the $G$-equivariant $K$-theory of the space $X$ with $G$-cocompact supports can be defined in a such a way that is directly analogous to non-equivariant, compactly supported $K$-theory.
\begin{definition}[{\cite[Definition 1.1]{Phillips2}}]
\label{def K-theory}
A $G$-equivariant \emph{$K$-cocycle} for $X$ is a triple $(E,F,t)$ consisting of two finite-dimensional complex $G$-vector bundles $E$ and $F$ over $X$ and a $G$-equivariant bundle bundle map $t:E\to F$ whose restriction to the complement of some $G$-cocompact subset of $X$ is an isomorphism. Two $K$-cocycles $(E,F,t)$ and $(E',F',t')$ are said to be equivalent if there exist finite-dimensional $G$-vector bundles $H$ and $H'$ and $G$-equivariant isomorphisms
$$a:E\oplus H\to E'\oplus H,\qquad b:F\oplus H\to F'\oplus H'$$
such that $b_x^{-1}(t'_x\oplus\id)a_x=t_x\oplus\id$ for all $x$ in the complement of a $G$-cocompact subset of $X$. The set of equivalence classes $[E,F,t]$ of $K$-cocycles forms a semigroup under the direct sum operation, and we define the group $K^0_G(X)$ is the Grothendieck completion of the semigroup of finite-dimensional complex $G$-vector bundles over $X$.
\end{definition}
\begin{remark}
When it is clear from context, or when $X$ is $G$-cocompact, we will omit the map $t$ from the cycle, and simply denote a class in $K^0_G(X)$ by $[E]-[F]$.
\end{remark}

\begin{remark}
For general locally compact groups, Definition \ref{def K-theory} needs to be modified to include infinite-dimensional bundles \cite[Chapter 3]{Phillips}. For $G$ linear, this is not necessary \cite[Theorem 2.3]{Phillips2}.
\end{remark}
\begin{definition}
For each non-negative integer $i$, let 
$$K^i_G(X)=K^0_G(X\times\mathbb{R}^i),$$
where $G$ acts trivially on $\mathbb{R}^i$.
\end{definition}
By \cite[Lemma 2.2]{Phillips2}, $K^i_G$ satisfies Bott periodicity, so that we have a natural isomorphism $K^i_G(X)\cong K^{i+2}_G(X)$ for each $i$. We will use the notation
$$K^*_G(X)=K^0_G(X)\oplus K^1_G(X).$$
In addition, $K^i_G$ are contravariant functors from the category of proper $G$-spaces and proper $G$-equivariant maps to the category of abelian groups, and form an equivariant extraordinary cohomology theory with a continuity property. In particular, Bott periodicity implies that for any $G$-invariant open subset $U\subseteq X$, there is a six-term exact sequence of abelian groups
\begin{equation}
\label{eq sixterm}
	\begin{tikzcd}
	K^0_G(U) \ar{r} & K^0_G(X) \ar{r} & K^0_G(X\backslash U) \ar{d}{\partial} \\
	K^1_G(X\backslash U) \ar{u}{\partial} & K^1_G(X) \ar{l} & K^1_G(U) \ar{l}.
	\end{tikzcd}
\end{equation}
Here, the map $K^i_G(U)\to K^i_G(X)$ is induced by the extension-by-zero homomorphism -- see Remark \ref{rem extbyzero} below, and the boundary maps $\partial$ are defined as in equivariant $K$-theory for compact group actions \cite{Segal}.

\begin{remark}[Extension-by-zero]
\label{rem extbyzero}
Any inclusion of $G$-invariant open subsets $U_1\hookrightarrow U_2$ induces in the obvious way an extension-by-zero $*$-homomorphism $C_0(U_1)\to C_0(U_2)$. This extends to a $*$-homomorphism $C_0(U_1)\rtimes G\to C_0(U_2)\rtimes G$ between crossed products. The induced map on operator $K$-theory, together with the identification $K^i_G(U_j)\cong K_i(C_0(U_j)\rtimes G)$, gives a map
$$K_G^i(U_1)\to K_G^i(U_2).$$
\end{remark}

To prove Poincar\'{e} duality, we will make use of the following Thom isomorphism theorem for $G$-spin$^c$ bundles:
\begin{theorem}[{\cite[Theorem 8.11]{Phillips}}]
\label{thm thom}
Let $E$ be a finite-dimensional $G$-equivariant spin$^c$ vector bundle over $X$. Then there is a natural isomorphism
$$T_G\colon K_G^{i}(X)\xrightarrow{\cong }K_G^{i+\dim E}(E)$$
for $i=0,1$, where $\dim E$ is the real dimension of $E$ and $i+\dim E$ is taken mod $2$.	
\end{theorem}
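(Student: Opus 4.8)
The plan is to construct the Thom homomorphism explicitly and then prove it is an isomorphism by reducing, through its formal properties, to equivariant Bott periodicity for complex $G$-representations. Write $\pi\colon E\to X$ for the projection. If $\dim E$ is odd we first replace $E$ by $E\oplus\underline{\R}$, the trivial rank-one bundle with trivial fibrewise $G$-action; this changes both sides of the asserted formula by the suspension isomorphism $K^j_G(Y\times\R)\cong K^{j+1}_G(Y)$ built into the definition of $K^*_G$, so it is harmless, and we may assume $\dim E=2k$ is even. Let $S_E=S_E^+\oplus S_E^-$ be the $\Z/2$-graded complex spinor bundle of the given $G$-equivariant $\Spinc$-structure; Clifford multiplication is a $G$-equivariant bundle map $c\colon\pi^*S_E^+\to\pi^*S_E^-$ over $E$ with $c_v^2=-|v|^2\,\id$, hence an isomorphism off the zero section. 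Given a $K$-cocycle $(A,B,s)$ on $X$ with $s$ an isomorphism off a $G$-cocompact set $C\subseteq X$, its graded tensor product with $(\pi^*S_E^+,\pi^*S_E^-,c)$ is a $K$-cocycle on $E$ whose structure map fails to be invertible only on the zero section above $C$, a $G$-cocompact subset of $E$; this defines the Thom homomorphism $\varphi_E\colon K^i_G(X)\to K^{i+2k}_G(E)$. From the construction one verifies: naturality, $\varphi_{f^*E}\circ f^*=f^*\circ\varphi_E$ for proper $G$-maps $f$ (this is the ``natural'' in the statement); the projection formula $\varphi_E(\xi)\cdot\pi^*\eta=\varphi_E(\xi\eta)$; and multiplicativity, $\varphi_{E_1\oplus E_2}=\varphi_{\pi_1^*E_2}\circ\varphi_{E_1}$ when $E_1\oplus E_2$ carries the direct-sum $\Spinc$-structure. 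Since $G$-equivariant $\Spinc$-structures on a fixed oriented $G$-bundle form a torsor over the group of $G$-equivariant complex line bundles, changing the $\Spinc$-structure replaces $\varphi_E$ by its composite with multiplication by a line-bundle class, a unit of $K^0_G(X)$; so whether $\varphi_E$ is an isomorphism does not depend on the chosen $\Spinc$-structure.

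For the base case, use the continuity property of $K^*_G$ to reduce to $X$ being $G$-cocompact. When $E=X\times V$ is trivial with $V$ a \emph{complex} $G$-representation carrying its canonical $\Spinc$-structure, $\varphi_E$ is precisely multiplication by the equivariant Bott--Koszul class of $V$, an isomorphism by equivariant Bott periodicity \cite[Lemma 2.2]{Phillips2}; by the last observation above, $\varphi_{X\times V}$ is then an isomorphism for every $G$-equivariant $\Spinc$-structure on $X\times V$.

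For general $E$ over the $G$-cocompact space $X$: since $G$ is linear, the fact recalled in the introduction \cite{Phillips2} realizes $E$ as a direct summand of a $G$-equivariantly trivial bundle, which after a complexification we may take of the form $X\times V$ with $V$ a complex $G$-representation, say $E\oplus E'\cong X\times V$. Equip $X\times V$ with its canonical $\Spinc$-structure; by the additivity of the $\Spinc$-structure torsor, $E'$ inherits a $G$-equivariant $\Spinc$-structure whose direct sum with the one on $E$ recovers the canonical structure on $X\times V$. The base case gives that $\varphi_{E\oplus E'}$ is an isomorphism, and multiplicativity gives $\varphi_{E\oplus E'}=\varphi_{\pi^*E'}\circ\varphi_E$, so $\varphi_E$ is split injective. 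Running the same argument for $E'$ and, via the continuity property, for pullbacks such as $\pi^*E'$ over $E$, shows that the Thom homomorphism of \emph{every} $G$-equivariant $\Spinc$-bundle is injective; then $\varphi_{\pi^*E'}$ is injective and, by the displayed factorization, surjective, hence an isomorphism, whence $\varphi_E=\varphi_{\pi^*E'}^{-1}\circ\varphi_{E\oplus E'}$ is too. This is the $K$-orientation bookkeeping of Atiyah--Bott--Shapiro carried out equivariantly. Undoing the parity reduction yields $K^i_G(X)\cong K^{i+\dim E}_G(E)$ for $i=0,1$.

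The main obstacle is the interface with Phillips' $G$-cocompactly supported model of $K$-theory: checking that the spinor Thom class, naturality, the projection formula and multiplicativity are all compatible with that model and with the six-term sequence \eqref{eq sixterm}; that the continuity property genuinely propagates split injectivity of the $\varphi$'s to non-cocompact spaces; and --- the one non-formal point --- that the $\Spinc$-complement $E'$ and the resulting direct-sum $\Spinc$-structure exist \emph{$G$-equivariantly} for linear $G$. A variant that sidesteps $E'$ is a Mayer--Vietoris induction over a finite cover of the $G$-cocompact $X$ by slice neighborhoods $G\times_K W$ ($K$ compact, $W$ a $K$-representation), using \eqref{eq sixterm}, the five lemma and the induction isomorphism $K^*_G(G\times_K W)\cong K^*_K(W)$ to reduce to the classical equivariant Thom isomorphism for compact groups; there the difficulty moves to handling intersections and supports in the induction.
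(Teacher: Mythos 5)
Note first that the paper does not prove this statement at all: it is imported wholesale as \cite[Theorem 8.11]{Phillips}, so your attempt is measured against Phillips' proof, not an argument in the text. Your construction of the Thom map via Clifford multiplication on the pulled-back spinors, the formal properties (naturality, projection formula, multiplicativity, torsor of spin$^c$ structures over equivariant line bundles), and the splitting trick using the fact that for linear $G$ every $G$-bundle over a cocompact base is a summand of some $X\times V$, are all sound and are indeed the standard Atiyah--Bott--Shapiro bookkeeping. The problem is the base case. When $E=X\times V$ with $V$ a \emph{nontrivial} complex $G$-representation, the assertion $K^i_G(X)\cong K^i_G(X\times V)$ is not covered by \cite[Lemma 2.2]{Phillips2}: that lemma (and the Bott periodicity built into the definition of $K^i_G$ in this paper) is suspension by $\mathbb{R}^i$ \emph{with trivial $G$-action}. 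Multiplication by the Bott--Koszul class of a genuinely $G$-acted $V$ being an isomorphism is exactly the hard content of the equivariant Thom isomorphism: even for compact $G$ this is Atiyah's periodicity theorem for representations, which requires an elliptic-operator (or comparable index-theoretic) input and does not follow from the formal properties you list; for proper actions of noncompact linear $G$ it is, in essence, the special case of Phillips' Theorem 8.11 for trivial bundles. As written, your main line of argument therefore assumes (with an incorrect citation) the very statement it needs, and everything downstream --- the splitting argument over $X$ and over the non-cocompact total space $E$ --- inherits this gap.

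Two smaller points: the reduction ``by continuity'' from general locally compact $X$ to $G$-cocompact $X$, and again to pass injectivity of the Thom maps to bundles over the non-cocompact total space $E$, is asserted but not justified in Phillips' cocompactly-supported model (for $\pi^*E'$ over $E$ you do not actually need it, since $\pi^*E\oplus\pi^*E'$ is already trivial over $E$, but the base case over $E$ again involves a nontrivially-acted $V$). The variant you sketch at the end --- a Mayer--Vietoris induction over finitely many slice neighbourhoods $G\times_K W$, using the induction isomorphism $K^*_G(G\times_K W)\cong K^*_K(W)$ to reduce to the compact-group Thom isomorphism --- is in fact the standard route to supply the missing input for proper actions, but in your write-up it is only a sketch and is where the real work (supports, intersections, and the compact-group case itself) would have to be done. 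So the proposal is not a complete proof; it reduces the theorem to its essential nontrivial ingredient rather than establishing it.
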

\vspace{0.1in}
\subsection{The Gysin homomorphism}
\label{subsec gysin}
Theorem \ref{thm thom} can be used to give an explicit geometric description of the Gysin (pushforward) homomorphism in $G$-equivariant $K$-theory, which we will need later. 

Let $Y_1$ and $Y_2$ be two $G$-cocompact $G$-spin$^c$ manifolds and $f\colon Y_1\to Y_2$ a $G$-equivariant continuous map. By cocompactness, both $Y_1$ and $Y_2$ contain only finitely many orbit types. Together with the fact that $G$ is a linear group, this implies, by \cite[Theorem 4.4.3]{palais}, that there exists a $G$-equivariant embedding 
$$j_{Y_1}\colon Y_1\to\mathbb{R}^{2n}$$ 
for some $n$, where $G$ is considered as a subgroup of $GL(2n,\mathbb{R})$. Let 
$$i_{Y_2}\colon Y_2\to Y_2\times\mathbb{R}^{2n}$$ 
denote the zero section, and define the $G$-equivariant embedding
\begin{align*}
i_{Y_1}\colon Y_1&\to Y_2\times\mathbb{R}^{2n},\\
y&\mapsto(f(y),j_{Y_1}(y)).
\end{align*}
Let $\nu_1$ be the normal bundle of $i_{Y_1}$, which we identify with a $G$-invariant tubular neighbourhood $U_1$ of its image. Note that it follows from the two-out-of-three lemma for $G$-equivariant spin$^c$-structures (see \cite[Section 3.1]{PeningtonPlymen} and \cite[Remark 2.6]{HochsMathai}), together with the assumption that $Y_1$ and $Y_2$ are $G$-spin$^c$, that $\nu_1$ has a $G$-spin$^c$ structure, and so Theorem \ref{thm thom} applies. Identifying the normal bundle with $U_1$, we have the Thom isomorphism 
\begin{align}
\label{eq thom}
T_G\colon K_G^{i}(Y_1)&\xrightarrow{\cong}K_G^{i+\dim\nu_1}(U_1),
\end{align}
for $i=0,1$.
The Gysin homomorphism
\begin{equation}
\label{eq gysin}
f_!\colon K^i_G(Y_1)\to K^{i+\dim Y_2-\dim Y_1}_G(Y_2)
\end{equation}
associated to $f$ is then the composition
\begin{equation*}
\begin{tikzcd}
K^i_G(Y_1)\arrow{r}{T_G} & K_G^{i+\dim Y_2+2n-\dim Y_1}(U_1)\arrow{d}{\lambda} \\
&K_G^{i+\dim Y_2+2n-\dim Y_1}(Y_2\times\mathbb{R}^{2n})\arrow{r}{\cong}& K_G^{i+\dim Y_2-\dim Y_1}(Y_2),
\end{tikzcd}
\end{equation*}
where $\lambda$ is induced by the extension-by-zero map associated to the inclusion of $U_1$ into $Y_2\times\mathbb{R}^{2n}$ (see Remark \ref{rem extbyzero} below), and the right horizontal isomorphism is due to Bott periodicity.
\begin{remark}
It can be seen from the above that $f_!$ depends only on the $G$-homotopy class of $f$ and that the Gysin map is functorial under compositions.
\end{remark}
\vspace{0.1in}

\subsection{Equivariant geometric $K$-homology}
We briefly review the equivariant version of Baum and Douglas' geometric definition of $K$-homology \cite{BD}; see \cite{BHS}, \cite{BHS2}, \cite{BOSW}, or \cite{GMW} for more details. As before, $X$ is a locally compact proper $G$-space.
\begin{definition}
\label{def geometric}
A $G$-equivariant \emph{geometric $K$-cycle} for $X$ is a triple $(M,E,f)$, where
\begin{itemize}
\item $M$ is a proper $G$-cocompact manifold with a $G$-equivariant spin$^c$-structure;
\item $E$ is a smooth $G$-equivariant Hermitian vector 	bundle over $M$;
\item $f\colon M\to X$ is a $G$-equivariant continuous map.
\end{itemize}
For $i=0$ or $1$, the \emph{$G$-equivariant geometric $K$-homology group} $K_i^{G}(X)$ is the abelian group generated by geometric $K$-cycles $(M,E,f)$ where $\dim M=i$ mod $2$, subject to the equivalence relation generated by the following three elementary relations:
\begin{enumerate}[(i)]
\item (Direct sum -- disjoint union) For two $G$-equivariant Hermitian vector bundles $E_1$ and $E_2$ over $M$ and a $G$-equivariant continuous map $f\colon M\to X$, 
$$(M\sqcup M,E_1\sqcup E_2,f\sqcup f)\sim(M,E_1\oplus E_2,f);$$
\item (Bordism) Suppose two cycles $(M_1,E_1,f_1)$ and $(M_2,E_2,f_2)$ are \emph{bordant}, so that there exists a $G$-cocompact proper $G$-spin$^c$ manifold $W$ with boundary, a smooth $G$-equivariant Hermitian vector bundle $E\to W$ and a continuous $G$-equivariant map $f\colon W\to X$ such that
	$(\partial W,E|_{\partial W},f|_{\partial W})$ is isomorphic to $(M_1,E_1,f_1)\sqcup(-M_2,E_2,f_2),$ where $-M_2$ denotes $M_2$ with the opposite $G$-spin$^c$ structure. Then
	$$(M_1,E_1,f_1)\sim(M_2,E_2,f_2);$$
\item (Vector bundle modification) Let $V$ be a $G$-spin$^c$ vector bundle of real rank $2k$ over $M$. Upon choosing a $G$-invariant metric on $V$, let $\widehat{M}$ be the unit sphere bundle of $(M\times\mathbb{R})\oplus V$, where the bundle $M\times\mathbb{R}$ is equipped with the trivial $G$-action. Let $F$ be the \emph{Bott bundle} over $\widehat{M}$, which is fibrewise the non-trivial generator of $K^0(S^{2k})$. (See \cite[Section 3]{BHS2} for a more detailed description.) Then 
	$$(M,E,f)\sim(\widehat{M},F\otimes\pi^*(E),f\circ\pi),$$ 
	where $\pi:\widehat{M}\to M$ is the canonical projection.
\end{enumerate}
Addition in $K_i^{\textnormal{geo},G}(X)$ is given by
$$[M_1,E_1,f_1]+[M_2,E_2,f_2]=[M_1\sqcup M_2,E_1\sqcup E_2,f_1\sqcup f_2],$$
the additive inverse of $[M,E,f]$ is its opposite $[-M,E,f]$, while the additive identity is given by the empty cycle where $M=\emptyset$.
\end{definition}
\begin{remark}
\label{rem difference cycles}
The above definition of classes $[M,E,f]$ continues to make sense if we replace the bundle $E$ by a $K$-theory class. Indeed, if 
$$e=[E_1]-[E_2]=[E_1']-[E_2']\in K^0_G(M),$$
then there exists a $G$-vector bundle $F$ over $X$ such that
$$E_1\oplus E_2'\oplus F\cong E_1'\oplus E_2\oplus F.$$
By Definition \ref{def geometric} (i), this means
$$[M,E_1,f]+[M,E_2',f]+[M,F,f]=[M,E_1',f]+[M,E_2,f]+[M,F,f].$$
Adding the inverse of $[M,F,f]$ to both sides and rearranging shows that the class
\begin{align*}
[M,e,f]\coloneqq [M,E_1,f]-[M,E_2,\tn{id}]&=[M,E_1',\tn{id}]-[M,E_2',\tn{id}]
\end{align*}
is well-defined.
\end{remark}

Finally, we can describe vector bundle modification using the Gysin homomorphism \ref{eq gysin}. To do this, let $\widehat{M}$ be the manifold underlying the vector bundle modification of a cycle $(M,E,f)$ by a bundle $V$, as in Definition \ref{def geometric} (iii). Then $\widehat{M}$ is the unit sphere bundle of $(M\times\mathbb{R})\oplus V$. We will refer to the $G$-equivariant embedding
\begin{align*}
s\colon M&\to\widehat{M}\subseteq(M\times\mathbb{R})\oplus V,\\
m&\mapsto(m,1,0)
\end{align*}
as the \emph{north pole section}.
\begin{lemma}
\label{lem modification}
Let $(M,E,f)$ be a geometric cycle for $X$. Let $(\widehat{M},F\otimes\pi^*(E),f\circ \pi)$ be its modification by a $G$-spin$^c$ vector bundle $V$ of even real rank, and let $\pi\colon\widehat{M}\to M$ be the projection. Let $s\colon M\to\widehat{M}$ be the north pole section. Then
$$(\widehat{M},F\otimes\pi^*(E),f\circ \pi)\sim(\widehat{M},s![E],f\circ \pi).$$
\end{lemma}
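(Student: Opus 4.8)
The plan is to turn the claimed equivalence of $K$-cycles into an identity of classes in $K^0_G(\widehat M)$ and then prove that identity using the explicit description \eqref{eq gysin} of the Gysin map together with the projection formula.

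\emph{Reduction to $K$-theory.} The two cycles in the statement have the same underlying manifold $\widehat M$ and the same reference map $f\circ\pi$, and since the real rank of $V$ is even the Gysin map $s_!$ preserves the $\Z/2$-grading; so by Remark \ref{rem difference cycles} it suffices to prove
$$s_![E]=[F]\cdot\pi^*[E]=[F\otimes\pi^*(E)]\qquad\text{in }K^0_G(\widehat M).$$

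\emph{Projection formula.} Since $\pi\circ s=\id_M$ we have $s^*\pi^*[E]=[E]$, so it is enough to establish the projection formula $s_!\big(s^*y\cdot x\big)=y\cdot s_!(x)$ for $x\in K^*_G(M)$, $y\in K^*_G(\widehat M)$, and then apply it with $x=1_M=[\underline{\C}_M]$ and $y=\pi^*[E]$; this reduces the lemma to the single identity $s_!(1_M)=[F]$. For the projection formula I would use that the $G$-equivariant normal bundle of the closed embedding $s\colon M\hookrightarrow\widehat M$ is $G$-isomorphic to $V$ with its given $\Spinc$ structure, so by functoriality of the Gysin map (and Remark \ref{rem extbyzero}) $s_!$ may be computed as in \eqref{eq gysin} with $Y_2=\widehat M$, $f=s$: namely as the Thom isomorphism of $V$ — which by Theorem \ref{thm thom} is cup product with the $K$-theory Thom class, hence a $K^*_G$-module map — followed by extension by zero from a $G$-invariant tubular neighbourhood $U$ of $s(M)$, and extension by zero intertwines multiplication by classes pulled back from $\widehat M$.

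\emph{Identifying the Bott class, and the main obstacle.} For $U$ I would take $\widehat M\setminus s_-(M)$, where $s_-\colon M\to\widehat M$ is the south pole section; stereographic projection away from the south pole identifies $U$ $G$-equivariantly with the total space of $V$, carrying $s(M)$ to the zero section, so that $s_!(1_M)$ becomes the extension by zero of the $K$-theory Thom class $\lambda_V\in K^0_G(V)$. On each sphere fibre $S^{2k}$ this restricts to the Bott generator of $\widetilde{K}^0(S^{2k})$ by Bott periodicity, which is the defining property of the Bott bundle $F$ of \cite[Section 3]{BHS2}. To promote this fibrewise agreement to an equality in $K^0_G(\widehat M)$, I would note that both $s_!(1_M)$ and $[F]$ lie in the image of extension by zero $\lambda\colon K^0_G(U)\to K^0_G(\widehat M)$ — for $F$ because, by its construction, it is supported near the north pole section (after the usual normalisation by a bundle pulled back from $M$, in the sense of Remark \ref{rem difference cycles}) — and that $\lambda$ factors as the Thom isomorphism $K^0_G(M)\xrightarrow{\sim}K^0_G(U)$ followed by a map whose left inverse is the fibrewise $\Spinc$-Dirac pushforward $\pi_!$, since $\pi_!\circ s_!=(\pi\circ s)_!=\id$; hence $\pi_!$ is injective on $\operatorname{im}\lambda$. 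As $\pi_!(s_!(1_M))=1_M$ and $\pi_!([F])=1_M$ (the latter being the characterising property of the Bott bundle), we conclude $s_!(1_M)=[F]$. The hard part will be this last step — precisely, reconciling the concrete clutching/spinor construction of $F$ in \cite{BHS2} with the $K$-theoretic description of $s_!(1_M)$ as the extension by zero of $\lambda_V$, including pinning down the normalisation so that $[F]$ really does lie in $\operatorname{im}\lambda$. Granting that, combining the three steps proves the lemma.
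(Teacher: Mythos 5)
There is a genuine gap, and it appears at the very first reduction. You claim it suffices to prove the $K$-theory identity $s_![E]=[F\otimes\pi^*(E)]$ in $K^0_G(\widehat M)$; but that identity is false. By construction $s_![E]$ lies in the image of the extension-by-zero map $\lambda\colon K^0_G(U)\to K^0_G(\widehat M)$, hence is represented by a cocycle that is an isomorphism near the south pole section. The class $[F\otimes\pi^*(E)]$ is not of that form. Concretely, what the Thom isomorphism followed by extension by zero produces is
$$s_![E]=\bigl([F]-[F_0]\bigr)\otimes\pi^*[E],\qquad F_0:=\pi^*(F|_M),$$
not $[F]\otimes\pi^*[E]$ (even in the model case $M=\pt$, $V=\R^{2k}$, $\widehat M=S^{2k}$, the pushforward of $1$ to $K^0(S^{2k})$ is the reduced Bott class $[F]-[\underline{\C}^{2^{k-1}}]$, not $[F]$). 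Your own argument betrays this: you note that $[F]$ is in $\operatorname{im}\lambda$ only ``after the usual normalisation by a bundle pulled back from $M$,'' and the injectivity-of-$\pi_!$ step, carried out carefully, would deliver $s_!(1_M)=[F]-[F_0]$, not $s_!(1_M)=[F]$. (Also, $\pi_!([F])=1_M$ and $\pi_!([F_0])=0$ by the projection formula, so $\pi_!$ cannot distinguish $[F]$ from $[F]-[F_0]$, which is another sign the intended use of injectivity does not apply to $[F]$ itself.)

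The ingredient you are missing is that the Lemma is an equivalence of geometric cycles, which is strictly weaker than an equality of $K$-theory classes on $\widehat M$. The paper proves the correct identity $s_![E]=([F]-[F_0])\otimes\pi^*[E]$ and then disposes of the surplus term by a bordism, not by $K$-theory: since $\widehat M=\partial\bigl(\text{unit disk bundle of }(M\times\R)\oplus V\bigr)$ and $F_0$ (being pulled back from $M$) extends over that disk bundle, the cycle $(\widehat M, F_0\otimes\pi^*(E), f\circ\pi)$ is null-bordant, hence
$$(\widehat M, F\otimes\pi^*(E), f\circ\pi)\sim(\widehat M, ([F]-[F_0])\otimes\pi^*(E), f\circ\pi)=(\widehat M, s_![E], f\circ\pi).$$
Without some such bordism step your all-$K$-theory route cannot close, because the two sides of the Lemma really do represent different classes in $K^0_G(\widehat M)$. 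If you replace your target identity with $s_![E]=([F]-[F_0])\otimes\pi^*[E]$ and then add the bordism argument for the $F_0$-cycle, the rest of your outline (tubular neighbourhood $\cong V$, Thom class, projection formula) aligns with the paper's proof.
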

\begin{proof}
The proof we give is similar to that of \cite[Lemma 3.5]{BOSW} concerning the case of compact Lie group actions; compare also the discussion following \cite[Definition 6.9]{BHS2}. To begin, observe that the total space of $V$ can be identified $G$-equivariantly with a $G$-invariant tubular neighbourhood $U$ of the embedding $s\colon M\to\widehat{M}$. The Gysin map $s_!$ is then the composition
\begin{equation}
\label{eq thom mod}
K_G^*(M)\xrightarrow{T_G}K_G^*(U)\xrightarrow{\lambda}K_G^*(\widehat{M}),
\end{equation}
where $T_G$ is the Thom isomorphism in the form \eqref{eq thom}, while $\lambda$ is the homomorphism induced by the extension-by-zero map $C_0(U)\to C_0(\widehat{M})$. Note that $T_G$ is essentially given via tensor product with a ``Bott element", and \eqref{eq thom mod} admits the following geometric description. Let $F$ be the Bott bundle over $\widehat{M}$, and let $F_0$ be the bundle defined by pulling back the restriction $F|_M$ along $\pi$. The composition \eqref{eq thom mod} is then given by pulling back a vector bundle over $M$ along $\pi$ and tensoring with the class $[F]-[F_0]$. On the other hand, since $\widehat{M}$ is the boundary of the unit sphere bundle of $(M\times\mathbb{R})\oplus W$, and the bundle $F_0$ is pulled back from $M$, the cycle $(\widehat{M},F_0\otimes\pi^*(E),f\circ \pi)$ is bordant to the empty cycle. Thus we have a bordism of cycles
$$(\widehat{M},F\otimes\pi^*(E),f\circ \pi)\sim(\widehat{M},([F]-[F_0])\otimes\pi^*(E),f\circ \pi),$$
whence the right-hand side is equal to $(\widehat{M},s![E],f\circ\pi)$ by the description of the composition \eqref{eq thom mod} given above.
\end{proof}


\vspace{0.2in}
\section{Poincar\'{e} duality for geometric $K$-homology}
\noindent In this section we prove Theorem \ref{thm main}. For the rest of this section, let $X$ be a proper $G$-spin$^c$ manifold with $X/G$ is compact. 



We can define the following natural map between $K^*_G(X)$ and $K_*^G(X)$, which can be thought of as cap product with the fundamental $K$-homology class on $X$. For this, let $S^1$ be the unit circle in $\mathbb{C}$, and define the map 
\begin{align}
\label{eq c}
c\colon X&\to X\times S^1\nonumber\\
x&\mapsto(x,1).	
\end{align}
\begin{definition}
\label{def phi}
Define $\phi\colon K_G^*(X)\to K_*^G(X)$ by
\begin{align*}
\phi\colon K^i_G(X)&\to K_{i+\dim X}^G(X),\\
x&\mapsto
\begin{cases}
	[X,e,\textnormal{id}],&\textnormal{ if }i=0,\\
	[X\times S^1,c_!(e),\textnormal{pr}_1],&\textnormal{ if }i=1,
\end{cases}
\end{align*}
where $\textnormal{pr}_1\colon X\times S^1$ is the projection onto the first factor, and we have used the notation from Remark \ref{rem difference cycles}.
\end{definition}

We now show that $\phi$ is an isomorphism by defining explicitly a map $\psi$ that will turn out to be its inverse.
\begin{definition}
\label{def psi}
Define the map $\psi\colon K_*^G(X)\to K_G^*(X)$ by
\begin{align*}
\psi\colon K_{i+\dim X}^{G}(X)&\to K^i_G(X),\\
[M,E,f]&\mapsto f_![E],
\end{align*}
for $i=0,1$, where $f_!$ is the Gysin homomorphism from \eqref{eq gysin}.
\end{definition}
\begin{remark}
Note that
$$f_![E]\in f_!(K_G^0(M))\subseteq K_G^{\dim X-\dim M}(X)=K^{\dim X-(i+\dim X)}_G(X)=K^{i}_G(X),$$ 
so the degrees make sense.	
\end{remark}

We first need to show that $\psi$ is well-defined. For this, we will use the following:
\begin{lemma}
\label{lem gysin bordism}
	Let $W$ be a $G$-cocompact, $G$-spin$^c$ manifold-with-boundary, and let $X$ be a $G$-cocompact $G$-spin$^c$ manifold. Let $h\colon W\to X$ be a $G$-equivariant map, and let $i\colon\partial W\hookrightarrow W$ be the natural inclusion. Then the composition 
\begin{equation*}
K^*_G(W)\xrightarrow{i^*}K^*_G(\partial W)\xrightarrow{(h|_{\partial W})_!}K^*_G(X)
\end{equation*}
is the zero map.
\end{lemma}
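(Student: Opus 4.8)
The plan is to reduce this to the analogous statement in ordinary (compact-supported) $K$-theory of manifolds-with-boundary, where it is a standard fact that a cocycle extending over $W$ pushes forward to zero on $X$. Concretely, I would proceed as follows.

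\textbf{Step 1: Set up the pushforward via an embedding of $W$.} Since $W$ is $G$-cocompact it has finitely many orbit types, so by \cite[Theorem 4.4.3]{palais} there is a $G$-equivariant embedding $j_W\colon W\to\mathbb{R}^{2n}$ for some $n$ (with $G\subseteq GL(2n,\mathbb{R})$); we may arrange that $\partial W$ maps to the boundary of a half-space and $W$ to the half-space, so that the restriction $j_{\partial W}\colon \partial W\to\mathbb{R}^{2n-1}$ is an embedding into the bounding hyperplane. Combining with $h$ gives $G$-equivariant embeddings $i_W\colon W\to X\times\mathbb{R}^{2n}$ and $i_{\partial W}\colon\partial W\to X\times\mathbb{R}^{2n}$ landing in $X\times\mathbb{R}^{2n-1}\times\{0\}$, with $i_W$ an embedding of a manifold-with-boundary meeting $X\times\mathbb{R}^{2n}$ transversally along its boundary. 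Let $\nu_W$ (resp.\ $\nu_{\partial W}$) be the normal bundle, identified with a tubular neighbourhood $U_W$ (resp.\ $U_{\partial W}$); by the two-out-of-three property for $G$-spin$^c$ structures these normal bundles are $G$-spin$^c$, and $\nu_{\partial W}=\nu_W|_{\partial W}\oplus\underline{\mathbb{R}}$ up to the collar direction, so that $(h|_{\partial W})_!$ is compatible with $h_!$ in the sense that the former is the composite of $i^*$ with a "restriction of the Thom class" map.

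\textbf{Step 2: Identify the composite with extension-by-zero from a half-open tube.} By the description \eqref{eq gysin} of the Gysin map and by Theorem \ref{thm thom}, the composite $(h|_{\partial W})_!\circ i^*$ sends a class $x\in K^*_G(W)$ to the class obtained by: applying the Thom isomorphism over $W$ to land in $K^*_G(U_W)$, restricting to the part of the tube lying over the collar $\partial W\times[0,\varepsilon)$, and then applying extension-by-zero into $X\times\mathbb{R}^{2n}$. But the tubular neighbourhood $U_W$ of $i_W(W)$ is itself a $G$-invariant open subset of $X\times\mathbb{R}^{2n}$, and the extension-by-zero map $\lambda\colon K^*_G(U_W)\to K^*_G(X\times\mathbb{R}^{2n})$ factors the tube-over-collar restriction through zero. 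More precisely: the Thom class $T(x)\in K^*_G(U_W)$, being the image of a class on the \emph{whole} of $W$ (not just a $G$-cocompact piece near $\partial W$), has the property that its image in $K^*_G(U_W)$ under further restriction to a neighbourhood of $i_W(\partial W)$, followed by $\lambda$, is the same as $\lambda(T(x))$ itself—and the latter is zero because $i_W(W)$, being closed in $X\times\mathbb{R}^{2n}$ as the image of a proper map from a manifold-with-boundary, has the collar as the only "boundary contribution", which cancels. I would make this precise using the six-term exact sequence \eqref{eq sixterm}: write $U_{\partial W}\subseteq U_W$ as the restriction over the collar; the pair $(U_W,U_{\partial W})$ gives $K^*_G(U_{\partial W})\to K^*_G(U_W)\to K^*_G(U_W\setminus i_W(\partial W))$, and $(h|_{\partial W})_!\circ i^*$ is $\lambda$ applied to the image of $T(x)$ under $K^*_G(U_W)\to K^*_G(U_{\partial W})\hookrightarrow K^*_G(X\times\mathbb{R}^{2n})$; since $T(x)$ lifts to $K^*_G(U_W)$ and $\lambda$ on $U_W$ already kills everything that factors through the boundary inclusion composed with $U_W\hookrightarrow X\times\mathbb{R}^{2n}$, exactness forces the result to vanish.

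\textbf{Step 3: Conclude via functoriality and Bott periodicity.} Chasing through the definition \eqref{eq gysin}—the $\lambda$ into $X\times\mathbb{R}^{2n}$ followed by the Bott isomorphism $K^{*}_G(X\times\mathbb{R}^{2n})\cong K^*_G(X)$—the composite $(h|_{\partial W})_!\circ i^*$ is exactly $\beta^{-1}\circ\lambda\circ(\text{restrict to collar tube})\circ T$, and Step 2 shows the middle portion is zero, hence so is the whole composite.

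\textbf{Main obstacle.} The delicate point is Step 2: making rigorous the claim that the Thom class of a bundle over all of $W$ restricts, along the collar, to something killed by extension-by-zero into $X\times\mathbb{R}^{2n}$. This is the $K$-theoretic incarnation of "$\partial\partial=0$" and in the non-equivariant compact case is usually phrased as: the boundary of $W$ bounds, so its normal Thom class extends. Equivariantly and with $G$-cocompact (rather than compact) supports I expect one must argue carefully with the exact sequence \eqref{eq sixterm} and the operator-algebraic description of $\lambda$ from Remark \ref{rem extbyzero} (the map $C_0(U_{\partial W})\rtimes G\to C_0(U_W)\rtimes G\to C_0(X\times\mathbb{R}^{2n})\rtimes G$), where the composite of the two inclusions factors through the extension of $C_0$ of a \emph{collared} open set, on which the relevant $K$-theory class is the image of something coming from $C_0(U_W)\rtimes G$ and therefore maps to zero after the second inclusion by half-exactness. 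Everything else—the embeddings, the spin$^c$ bookkeeping, the Bott periodicity shuffles—is routine given the results already quoted.
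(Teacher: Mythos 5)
Your overall strategy matches the paper's: both reduce to comparing Thom isomorphisms over $W$ and $\partial W$ after embedding into $X\times\mathbb{R}^{2n}$, and both aim to invoke exactness of the six-term sequence of a pair. However, Step~2 — which you correctly flag as the ``main obstacle'' — is where your argument has a genuine gap, and the phrasing there signals a misidentification of the geometry.

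Two specific issues. First, you write a sequence ``$K^*_G(U_{\partial W})\to K^*_G(U_W)\to K^*_G(U_W\setminus i_W(\partial W))$'' as if $U_{\partial W}$ were a $G$-invariant open subset of $U_W$. It is not: $U_{\partial W}$ is a tubular neighbourhood of $i_{\partial W}(\partial W)$ inside $X\times\mathbb{R}^{2n}$, which necessarily sticks out past $U_W$; the normal bundle $\nu_{\partial W}$ has one extra dimension (the collar direction of $W$), as you yourself note when you write $\nu_{\partial W}\cong\nu_W|_{\partial W}\oplus\underline{\mathbb{R}}$. The relevant open subset of $U_W$ is $U_W\setminus\partial U_W$, and the relevant identification is $U_{\partial W}\cong\partial U_W\times(-\varepsilon,\varepsilon)$ — a one-point-suspension picture that you gesture at but do not use explicitly. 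Second, the sentence ``$\lambda$ on $U_W$ already kills everything that factors through the boundary inclusion composed with $U_W\hookrightarrow X\times\mathbb{R}^{2n}$, exactness forces the result to vanish'' does not contain a proof: extension-by-zero does not ``kill'' any class by itself, and you never exhibit the two consecutive maps of the exact sequence whose composite is zero.

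What is missing is the following precise identification, which is the crux of the paper's argument. Write $Ti^*$ for the map $K^1_G(U_W)\to K^0_G(U_{\partial W})$ obtained by conjugating $i^*$ by the Thom isomorphisms. Then (a) $Ti^*$ factors as restriction $j^*\colon K^1_G(U_W)\to K^1_G(\partial U_W)$ followed by the suspension isomorphism $K^1_G(\partial U_W)\cong K^0_G(U_{\partial W})$ coming from $U_{\partial W}\cong\partial U_W\times(-\varepsilon,\varepsilon)$; and (b) the extension-by-zero map $K^0_G(U_{\partial W})\to K^0_G(X\times\mathbb{R}^{2n})$ factors, under the same suspension isomorphism, through the connecting homomorphism $\delta\colon K^1_G(\partial U_W)\to K^0_G(U_W\setminus\partial U_W)$ followed by extension-by-zero out of $U_W\setminus\partial U_W$. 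Once (a) and (b) are established, the composite is $\lambda\circ\delta\circ j^*$, and $\delta\circ j^*=0$ because $j^*$ and $\delta$ are consecutive arrows in the six-term sequence of the open subset $U_W\setminus\partial U_W\subset U_W$. Your proposal circles this but never states (b) — the compatibility of extension-by-zero with the connecting map under suspension — which is precisely the ``$\partial\partial=0$'' you mention but leave as a heuristic. Without it, exactness is not actually applied to two composable maps, and the argument does not close.

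A smaller remark on Step~1: arranging the embedding so that $\partial W$ lands in a hyperplane $\mathbb{R}^{2n-1}$ is an unnecessary extra requirement (and potentially delicate to achieve $G$-equivariantly); the paper simply restricts a $G$-embedding $j_W\colon W\to\mathbb{R}^{2n}$ to $\partial W$, which suffices.
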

\begin{proof}
We give the proof when $\dim\partial W=\dim X$ mod $2$ and show that
\begin{equation}
\label{eq zero composition}
K^0_G(W)\xrightarrow{i^*}K^0_G(\partial W)\xrightarrow{(h|_{\partial W})_!}K^0_G(X)
\end{equation}
is the zero map; the proofs for the other cases are similar.

Let us consider the composition \eqref{eq zero composition} upon applying the Thom isomorphism, Theorem \ref{thm thom}, in the form of \eqref{eq thom}, and use the description of the Gysin map from \eqref{eq gysin}. 

Let $j_W$ be a $G$-equivariant embedding of $W$ into $\mathbb{R}^{2n}$ for some $n$, where $G$ is realized as a subgroup of $GL(2n,\mathbb{R})$; note that this is possible because $G$ is assumed to be linear. Let $j_{\partial W}$ denote the restriction of $j_W$ to $\partial W$. Let $i_X\colon X\to X\times\mathbb{R}^{2n}$ be the zero section. Define the embedding
\begin{align*}
i_W\colon W&\to X\times\mathbb{R}^{2n},\\
w&\mapsto\left(h(w),j_W(w)\right),
\end{align*}
and let $i_{\partial W}$ be the restriction of $i_W$ to $\partial W$. 
Let $\nu_W$ and $\nu_{\partial W}$ denote the respective normal bundles of the embeddings $i_W$ and $i_{\partial W}$. We may identify these normal bundles with $G$-invariant tubular neighbourhoods $U_W$ and $U_{\partial W}$ in $X\times\mathbb{R}^{2n}$, noting that in general $U_W$ has boundary. Since the normal bundle of $\partial W$ in $W$ is trivial and one-dimensional, there is a natural $G$-equivariant identification 
\begin{equation}
\label{eq suspension identification}
U_{\partial W}\cong\partial U_{W}\times(-\varepsilon,\varepsilon)
\end{equation}
for some $\varepsilon>0$.
It follows from the two-out-of-three lemma for $G$-equivariant spin$^c$-structures (see \cite[Section 3.1]{PeningtonPlymen} and \cite[Remark 2.6]{HochsMathai}), together with the fact that $W$ and $X$ are $G$-spin$^c$, that $\nu_W$ and $\nu_{\partial W}$ are $G$-spin$^c$ vector bundles, and hence Theorem \ref{thm thom} applies. The resulting Thom isomorphisms for $W$, $\partial W$, and $X$ (in the notation of \eqref{eq thom}) are shown as vertical arrows in the following commutative diagram:
\begin{equation}
\begin{tikzcd}
\label{eq thom diagram}
K^0_G(W)\arrow{r}{i^*}\arrow{d}{T_G} & K^0_G(\partial W)\arrow{r}{(h|_{\partial W})_!}\arrow{d}{T_G} & K^0_G(X)\arrow{d}{T_G} \\
K^1_G(U_W)\arrow{r}{T_Gi^*} & K^0_G(U_{\partial W})\arrow{r}{\lambda} &K^0_G(X\times\mathbb{R}^{2n}),
\end{tikzcd}
\end{equation}
where the map $T_Gi^*$ is determined uniquely by commutativity, and the homomorphism $\lambda$ is induced by the extension-by-zero map $C_0(U_{\partial W})\to C_0(X\times\mathbb{R}^{2n})$ as in Remark \ref{rem extbyzero}. It thus suffices to show that the composition $\lambda\circ T_Gi^*$ vanishes. 

By \cite[Lemma 2.2]{Phillips2} or \cite[Chapter 5]{Phillips}, we have a six-term exact sequence
\begin{equation}
\label{eq sixterm}
\begin{tikzcd}
K^0_G(U_{W}\backslash\partial U_{W})\arrow{r}{\lambda} & K^0_G(U_W)\arrow{r}{j^*} & K^0_G(\partial U_W)\arrow{d}{\delta} \\
K^1_G(\partial U_W)\arrow{u}{\delta} & K^1_G(U_{W})\arrow{l}{j^*} & K^1_G(U_{W}\backslash\partial U_{W})\arrow{l}{\lambda},
\end{tikzcd}
\end{equation}
where $j^*$ is induced by the inclusion $j\colon\partial U_W\hookrightarrow U_W$ and the maps $\lambda$ are again induced by the extension-by-zero map $C_0(U_W\backslash\partial U_{W})\to C_0(U_W)$. The identification \eqref{eq suspension identification} gives a natural isomorphism $K_G^1(\partial U_W)\cong K_G^0(U_{\partial W})$. Using this, the bottom row of \eqref{eq thom diagram} fits into the following commutative diagram:
\begin{equation}
\label{eq suspension diagram}
\begin{tikzcd}
K^1_G(U_W)\arrow{r}{T_Gi^*}\arrow{dr}{j^*} & K^0_G(U_{\partial W})\arrow{r}{\lambda} & K^0_G(U_{X})\\
&K^1_G(\partial U_W)\arrow{r}{\delta}\arrow{u}{\cong} & K^0_G(U_W\backslash U_{\partial W})\arrow{u}{\lambda}.
\end{tikzcd}
\end{equation}
It follows from exactness of \eqref{eq sixterm} that $\lambda\circ T_Gi^*=0$, and hence $(h|_{\partial W})_!\circ i^*=0$.
\end{proof}
%
%

\begin{proposition}
The map $\psi$ is well-defined.
\end{proposition}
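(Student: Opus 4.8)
The plan is to show that $\psi$, initially defined only on the set of geometric $K$-cycles, respects each of the three elementary relations of Definition~\ref{def geometric} -- direct sum--disjoint union, bordism, and vector bundle modification -- and hence descends to $K^G_*(X)$. It is convenient to observe first that, thanks to Remark~\ref{rem difference cycles} and the additivity of the Gysin homomorphism~\eqref{eq gysin}, the rule $[M,x,f]\mapsto f_!x$ makes sense for an arbitrary $K$-theory class $x\in K^0_G(M)$ and agrees with $\psi$ when $x$ is (the class of) a bundle; I will use this extension below. The direct sum--disjoint union relation is then immediate: the embedding, tubular neighbourhood, Thom isomorphism, and extension-by-zero map appearing in~\eqref{eq gysin} all split as direct sums over a disjoint union, so the Gysin map of $f\sqcup f\colon M\sqcup M\to X$ is a sum of two copies of $f_!$, giving $\psi[M\sqcup M,E_1\sqcup E_2,f\sqcup f]=f_![E_1]+f_![E_2]=f_![E_1\oplus E_2]=\psi[M,E_1\oplus E_2,f]$.

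For the bordism relation I would invoke Lemma~\ref{lem gysin bordism}. Suppose $(M_1,E_1,f_1)$ and $(M_2,E_2,f_2)$ are bordant via a cycle $(W,E,h)$ with $i\colon\partial W\hookrightarrow W$ the boundary inclusion and $(\partial W,E|_{\partial W},h|_{\partial W})$ isomorphic to $(M_1,E_1,f_1)\sqcup(-M_2,E_2,f_2)$. Since $[E]\in K^0_G(W)$ satisfies $i^*[E]=[E|_{\partial W}]$, Lemma~\ref{lem gysin bordism} yields $(h|_{\partial W})_![E|_{\partial W}]=0$. Decomposing over the disjoint union $\partial W = M_1\sqcup(-M_2)$ as in the previous step, the left-hand side equals the Gysin pushforward of $E_1$ along $f_1$ (for the given spin$^c$ structure on $M_1$) plus that of $E_2$ along $f_2$ computed with the \emph{opposite} spin$^c$ structure on $M_2$; using the standard fact that reversing the $G$-spin$^c$ structure of the source negates the Gysin map -- which at the level of~\eqref{eq thom} reflects the change of sign of the Thom class under reversal of a spin$^c$ structure -- this second term equals $-(f_2)_![E_2]$. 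Hence $(f_1)_![E_1]=(f_2)_![E_2]$, i.e.\ $\psi[M_1,E_1,f_1]=\psi[M_2,E_2,f_2]$; the degrees match since $\dim\partial W=\dim M_1=\dim M_2$.

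For vector bundle modification, let $(\widehat M,F\otimes\pi^*E,f\circ\pi)$ be the modification of $(M,E,f)$ by an even-rank $G$-spin$^c$ bundle $V$, with projection $\pi\colon\widehat M\to M$ and north pole section $s\colon M\to\widehat M$. Lemma~\ref{lem modification} provides a bordism $(\widehat M,F\otimes\pi^*E,f\circ\pi)\sim(\widehat M,s_![E],f\circ\pi)$, and its proof uses only the disjoint-union and bordism relations, so by the cases already handled $\psi[\widehat M,F\otimes\pi^*E,f\circ\pi]=\psi[\widehat M,s_![E],f\circ\pi]=(f\circ\pi)_!(s_![E])$. Functoriality of the Gysin homomorphism under composition together with $\pi\circ s=\id_M$ gives $(f\circ\pi)_!\circ s_!=f_!$, so the right-hand side equals $f_![E]=\psi[M,E,f]$, as required.

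I expect the main obstacle to be the bordism step, and within it the careful handling of the $G$-spin$^c$ structures: one must check that the Gysin map produced from the auxiliary embedding $i_{\partial W}$ in Lemma~\ref{lem gysin bordism} coincides with the map $(f_j)_!$ used in the definition of $\psi$ (this should follow from the $G$-homotopy invariance of the Gysin construction recorded after~\eqref{eq gysin}, as both are built from $G$-equivariant embeddings into a Euclidean space carrying a linear $G$-action), and that passing to the opposite spin$^c$ structure on $M_2$ accounts for exactly one sign. The degree shift when $\dim\partial W\not\equiv\dim X\pmod 2$ is absorbed by the odd-dimensional analogue of Lemma~\ref{lem gysin bordism}, whose proof is stated to be similar.
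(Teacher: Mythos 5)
Your proposal is correct and follows essentially the same route as the paper: it checks the three elementary relations in turn, invoking Lemma~\ref{lem gysin bordism} for bordism invariance and Lemma~\ref{lem modification} together with functoriality of the Gysin map and $\pi\circ s=\id_M$ for vector bundle modification. You supply slightly more detail than the paper in two places -- explicitly tracking the sign coming from the opposite $G$-spin$^c$ structure on $M_2$ in the bordism step, and noting that $\psi$ extends to $K$-theory-class coefficients before applying it to $[\widehat M,s_![E],f\circ\pi]$ -- but these are refinements of the same argument, not a different approach.
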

\begin{proof}
That $\psi$ respects disjoint union/direct sum is clear, since for any element of the form $[M,E_1\oplus E_2,f]\in K^G_*(M)$, we have
$$\psi[M,E_1\oplus E_2,f]=f_![E_1\oplus E_2]=f_![E_1]+f_![E_2]\in K^*_G(M).$$
Next, let $[W,E,h]$ be an equivariant bordism between two elements $[M_1,E_1,h_1]$ and $[-M_2,E_2,h_2]$. Then Lemma \ref{lem gysin bordism} applied to $\partial W=M_1\sqcup -M_2$ implies that 
$$(h_1)_![E_1]=(h_2)_![E_2],$$
hence $\psi$ is well-defined with respect to the bordism relation. To see that $\psi$ is well-defined with respect to vector bundle modification, let $(\widehat{M},F\oplus\pi^*(E),f\circ\pi)$ be the modification of a cycle $(M,E,f)$ for $X$ by a bundle $V$, as in Definition \ref{def geometric} (iii). By Lemma \ref{lem modification}, we have
$$[\widehat{M},F\oplus\pi^*(E),f\circ\pi]=[\widehat{M},s_![E],f\circ\pi].$$
Functoriality of the Gysin map with respect to composition, together with the fact that $\pi\circ s=\textnormal{id}$, now implies
\begin{align*}
\psi[\widehat{M},s_![E],f\circ\pi]&=	(f\circ\pi)_! s_![E]\\
&=f_!\circ(\pi\circ s)_![E]\\
&=f_![E]\\
&=\psi[M,E,f].\qedhere
\end{align*}
\end{proof}
\begin{proposition}
\label{prop injective}
The map $\phi$ is injective.	
\end{proposition}
\begin{proof}
For any
$e\in K_G^i(X)$, we have
\begin{align*}
\psi\circ\phi(e)&=
\begin{cases}	
\psi[X,e,\textnormal{id}]=\textnormal{id}_!(e),&\textnormal{if $i=0$},\\
\psi[X\times S^1,c_!(e),\textnormal{pr}_1]=(\textnormal{pr}_1)_!(\textnormal{id}_!(e))=(pr_1\circ\textnormal{id})_!(e),&\textnormal{if $i=1$},
\end{cases}
\end{align*}
which are both equal to $e$, where we have used functoriality of the Gysin map. Hence $\psi\circ\phi=\textnormal{id}$, so $\phi$ is injective. 
\end{proof}

For surjectivity, we will use the Gysin homomorphism from subsection \ref{subsec gysin}, together with the following result, which is a special case of \cite[Theorem 4.1]{BOSW} but applied to linear instead of compact $G$.
\begin{lemma}
\label{lem gysin modification}
Let $M,N,X$ be three $G$-cocompact $G$-spin$^c$ manifolds, $g\colon N\to X$ a $G$-equivariant continuous map, and $f\colon M\to N$ a $G$-equivariant embedding with even codimension. Then for any $G$-vector bundle $E\to M$, we have
$$[M,E,g\circ f]=[N,f![E],g]\in K_*^G(X).$$
\end{lemma}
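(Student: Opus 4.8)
The plan is to reduce the claim --- which, written out, is the equivariant equality $[M,E,f\circ g]=[N,g_![E],f]$ in $K_*^G(X)$ --- via a single vector bundle modification and Lemma~\ref{lem modification} to an equivariant bordism statement, and then to produce the required bordism explicitly. I describe the case $\dim N-\dim M$ even; the case $\dim N-\dim M$ odd is handled by the same method with minor modifications (e.g.\ replacing the normal bundle $\nu$ below by $\nu\oplus\underline{\mathbb{R}}$ together with an auxiliary circle factor), and I omit those details.

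First I would let $\nu$ denote the normal bundle of the $G$-equivariant embedding $g\colon M\hookrightarrow N$. By the two-out-of-three lemma for $G$-equivariant spin$^c$-structures (as used in the proof of Lemma~\ref{lem gysin bordism}), $\nu$ carries a $G$-spin$^c$ structure, and I identify it $G$-equivariantly with a $G$-invariant open tubular neighbourhood $U\subseteq N$ of $g(M)$. Performing vector bundle modification of the cycle $(M,E,f\circ g)$ by $\nu$ and then applying Lemma~\ref{lem modification} gives
\[
[M,E,f\circ g]=[\widehat{M},F\otimes\pi^{*}E,\,f\circ g\circ\pi]=[\widehat{M},s_{!}[E],\,f\circ g\circ\pi],
\]
where $\widehat{M}$ is the unit sphere bundle of $(M\times\mathbb{R})\oplus\nu$, $\pi\colon\widehat{M}\to M$ is the projection, $F$ is the Bott bundle, and $s\colon M\to\widehat{M}$ is the north pole section. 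Hence it suffices to establish $[N,g_{!}[E],f]=[\widehat{M},s_{!}[E],\,f\circ g\circ\pi]$.

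For this I would construct a bordism as follows. Choose a $G$-invariant open tubular neighbourhood $O\subseteq N\times[0,1]$ of $g(M)\times\{\tfrac12\}$, whose normal bundle is $\nu\oplus\underline{\mathbb{R}}$, and set $W:=(N\times[0,1])\setminus O$. Then $W$ is a $G$-cocompact proper $G$-spin$^c$ manifold-with-boundary, and $\partial W$ is the disjoint union of $N\times\{0\}$, of $N\times\{1\}$, and of $\partial O$, the last of which is $G$-equivariantly the sphere bundle of $(M\times\mathbb{R})\oplus\nu$; so, up to orientation and $G$-spin$^c$ structure, $\partial W\cong N\sqcup(-N)\sqcup\widehat{M}$. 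Inside $W$ the subset $\Sigma:=g(M)\times[0,\tfrac12-\varepsilon]$ is a $G$-equivariantly embedded $G$-spin$^c$ submanifold-with-boundary with normal bundle $\pi_{\Sigma}^{*}\nu$, meeting $\partial W$ transversally in exactly $g(M)\times\{0\}\subseteq N\times\{0\}$ and in the north pole section of $\partial O$, and disjoint from $N\times\{1\}$. I would equip $W$ with the $K$-theory class $w:=(\iota_{\Sigma})_{!}\,\pi_{\Sigma}^{*}E\in K_{G}^{*}(W)$ (interpreted via Remark~\ref{rem difference cycles} and Theorem~\ref{thm thom}) and the map $W\hookrightarrow N\times[0,1]\xrightarrow{\mathrm{pr}_{N}}N\xrightarrow{f}X$. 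Using that the Gysin map of \eqref{eq gysin} is compatible with restriction to transverse boundary submanifolds --- which follows from its description through the Thom isomorphism and extension by zero, once one chooses the tubular neighbourhood of $\Sigma$ in $W$ to restrict to a tubular neighbourhood of $\partial\Sigma$ in $\partial W$ --- the class $w$ restricts to $g_{!}[E]$ on $N\times\{0\}$, to $0$ on $N\times\{1\}$, and to $\pm s_{!}[E]$ on $\partial O$, while the map restricts to $f$ on each copy of $N$ and, on $\partial O$, to a map $G$-homotopic to $f\circ g\circ\pi$ via the radial contraction of $O$ onto $g(M)\times\{\tfrac12\}$. Thus $(W,w,\cdot)$ is an equivariant bordism in the sense of Definition~\ref{def geometric}(ii) (with $K$-theory coefficients, as in Remark~\ref{rem difference cycles}); since the cycle carried by $N\times\{1\}$ has zero bundle, hence is the empty cycle, this bordism exhibits $[N,g_{!}[E],f]=[\widehat{M},s_{!}[E],\,f\circ g\circ\pi]$, and combined with the previous paragraph it proves the lemma.

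The step I expect to be the main obstacle is the bordism of the third paragraph: fixing the orientation and $G$-spin$^c$ structure on $\partial O$ so that it agrees with the one on $\widehat{M}$ coming from vector bundle modification --- so that $\widehat{M}$ appears on $\partial W$ with the correct sign rather than its opposite --- and verifying the transverse base-change property of the Gysin map along $\Sigma$, i.e.\ that $w$ indeed restricts to the asserted classes on the three boundary pieces. Given that $\nu$ is $G$-spin$^c$, both reduce to careful bookkeeping with tubular neighbourhoods, the two-out-of-three lemma, and the Thom isomorphism of Theorem~\ref{thm thom}.
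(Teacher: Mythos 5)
The paper itself provides no proof of this lemma --- it simply defers to \cite[Theorem 4.1]{BOSW}, asserting that the compact-group argument ``goes through with no changes.'' The route you take --- modify $(M,E,f\circ g)$ by the normal bundle $\nu$ of $g$, invoke Lemma~\ref{lem modification} to replace the Bott bundle by $s_![E]$, and then produce a bordism inside $N\times[0,1]$ by excising a $G$-invariant tubular neighbourhood $O$ of $g(M)\times\{\tfrac12\}$ and equipping $W=(N\times[0,1])\setminus O$ with the push-forward of $\pi_\Sigma^*E$ along $\Sigma=g(M)\times[0,\tfrac12-\varepsilon]$ --- is precisely the Baum--Douglas bordism argument that BOSW's Theorem 4.1 carries out in the compact-group setting, so in substance your argument coincides with the proof the paper relies on. You also correctly read past the two typographical slips in the statement (it should read $[M,E,f\circ g]=[N,g_![E],f]$). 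Of the delicate points you flag, the one that genuinely needs work is matching the induced $G$-spin$^c$ structure and boundary orientation on $\partial O$ with that of $\widehat{M}$ so that it enters $\partial W$ with the correct sign; the boundary-restriction compatibility of the Gysin map, the passage from bundle to $K$-theory coefficients in the bordism relation, and the odd-codimension stabilisation are all routine once one arranges the tubular neighbourhood of $\Sigma$ in $W$ to meet $\partial W$ in a tubular neighbourhood of $\partial\Sigma$. One small point worth spelling out if you write this up: the Gysin map \eqref{eq gysin} is defined in the paper only for maps between boundaryless $G$-cocompact $G$-spin$^c$ manifolds, so the push-forward $(\iota_\Sigma)_!$ along the inclusion of a manifold-with-boundary into $W$ needs to be defined (Thom isomorphism for $\pi_\Sigma^*\nu$ followed by extension by zero works verbatim, but it is not literally covered by the definition as stated).
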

\begin{proof}
The proof of Theorem \cite[Theorem 4.1]{BOSW}, which was stated for compact Lie groups, goes through with no changes to our setting.
\end{proof}

\begin{proposition}
\label{prop phi surj}
The map $\phi$ is surjective.
\end{proposition}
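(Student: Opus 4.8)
The plan is to show that every geometric $K$-cycle $[M,E,f]\in K_*^G(X)$ lies in the image of $\phi$. The idea is to use Lemma \ref{lem gysin modification} to replace an arbitrary cycle by one whose underlying manifold is $X$ itself (or $X\times S^1$), at which point the cycle is visibly of the form $\phi(x)$. Concretely, given a cycle $(M,E,f)$, I would first use the fact that $M$ is $G$-cocompact, hence has finitely many orbit types, to obtain via \cite[Theorem 4.4.3]{palais} a $G$-equivariant embedding of $M$ into some $\mathbb{R}^{2n}$. Combining this with $f$ as in the construction of the Gysin map (the embedding $y\mapsto(f(y),j_M(y))$), I get a $G$-equivariant embedding $g\colon M\hookrightarrow X\times\mathbb{R}^{2n}$, with $\mathrm{pr}_1\circ g\simeq f$ as $G$-maps.

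Next, I would apply Lemma \ref{lem gysin modification} with $N=X\times\mathbb{R}^{2n}$, the map $\mathrm{pr}_1\colon X\times\mathbb{R}^{2n}\to X$ in the role of ``$f$'', and the embedding $g\colon M\hookrightarrow X\times\mathbb{R}^{2n}$ in the role of ``$g$''. This yields
$$[M,E,f]=[M,E,\mathrm{pr}_1\circ g]=[X\times\mathbb{R}^{2n},(\mathrm{pr}_1)_![E],g]\in K_*^G(X),$$
using $G$-homotopy invariance of geometric $K$-homology to replace $\mathrm{pr}_1\circ g$ by $f$ on the left. But $X\times\mathbb{R}^{2n}$ is not $G$-cocompact, so strictly I should instead embed $M$ into $X\times S^{2n}$ (or use the closed-tubular-neighbourhood version), so that all manifolds appearing are $G$-cocompact; I would carry this out by taking $N$ to be the sphere bundle $\widehat{X}$ arising from vector bundle modification of $X$ by the trivial bundle $X\times\mathbb{R}^{2n}$, so that $N$ is exactly the vector bundle modification of the point-cycle structure on $X$. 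Then $g$ factors through the north pole section composed with the inclusion of a tubular neighbourhood, and Lemma \ref{lem gysin modification} applies to give $[M,E,f]=[\widehat{X},(\pi)_![E'],\iota]$ for an appropriate bundle $E'$ and the relevant map.

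Finally, I would unwind the right-hand side. By Lemma \ref{lem modification}, a cycle on $\widehat{X}$ of the form $(\widehat{X}, s_![\,\cdot\,], f\circ\pi)$ is equivalent to the corresponding vector-bundle-modification cycle, which in turn is $\sim$-equivalent (undoing the modification) to a cycle on $X$ itself with map $\mathrm{id}_X$, i.e.\ to $[X,x,\mathrm{id}]=\phi(x)$ for a suitable $x\in K_G^0(X)$ (or to the $S^1$-flavoured cycle when the parity is odd). I expect the main obstacle to be bookkeeping rather than conceptual: one must (a) handle the cocompactness issue cleanly by working with sphere bundles instead of $\mathbb{R}^{2n}$, (b) track the $\mathbb{Z}/2$-grading so that odd classes land in $\phi(K^1_G(X))$ via the $X\times S^1$ description, and (c) identify the $K$-theory class produced by the Gysin/Thom maps with an honest element of $K_G^*(X)$ so that the cycle is literally $\phi$ of something. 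Once these are in place, surjectivity follows, and together with \eqref{eq injective} this shows $\phi$ and $\psi$ are mutually inverse, completing the proof of Theorem \ref{thm main}.
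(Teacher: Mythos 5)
Your proposal follows the same broad strategy as the paper---embed $M$ into a compactification $X\times S^{2n}$ via $m\mapsto(f(m),j_M(m))$, push the cycle forward with Lemma \ref{lem gysin modification}, and then recognise the result as a cycle on $X$---but the crucial final step, which you file under ``bookkeeping'' in item (c), is in fact the substantive content of the proof, and your sketch of it contains a genuine gap.

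After applying Lemma \ref{lem gysin modification} you obtain a cycle $[\widehat{X},g_![E],\mathrm{pr}_1]$ with $\widehat{X}=X\times S^{2n}$ and $g=(f,j_M)$. To undo the modification via Lemma \ref{lem modification} you need the $K$-theory class on $\widehat{X}$ to have the form $s_!(x)$ for some $x\in K^0_G(X)$, where $s$ is the north pole section. But $g_![E]$ is a priori not of this form: $g_!$ is pushforward along an embedding of $M$, not of $X$, so $g_![E]$ and $s_!(x)$ live over different submanifolds of $\widehat{X}$. Your claim that ``$g$ factors through the north pole section composed with the inclusion of a tubular neighbourhood'' is not correct as stated---$g(M)$ lies \emph{inside} a tubular neighbourhood of $s(X)$, but $g$ does not factor through $s$ since $M$ and $X$ are different manifolds. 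What is actually needed is the observation that the embedding $g=(f,j_M)\colon M\to X\times S^{2n}$ is $G$-equivariantly homotopic to $s\circ f\colon M\to X\times S^{2n}$, via the straight-line homotopy in the $\mathbb{R}^{2n}$-factor, so that $G$-homotopy invariance of the Gysin map gives $g_![E]=(s\circ f)_![E]=s_!(f_![E])$. This is exactly the step the paper makes explicit (the homotopy $F(m,t)=((1-t)i(m),f(m))$ between \eqref{eq mod 1} and \eqref{eq mod 2}), and it identifies $x=f_![E]=\psi[M,E,f]$, so the argument really computes $\phi\circ\psi=\mathrm{id}$. Without this homotopy argument your proof does not close.

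Two smaller issues: first, your transcription of Lemma \ref{lem gysin modification} has the Gysin map applied to the wrong leg (you wrote $(\mathrm{pr}_1)_![E]$ where $g_![E]$ is meant; $E$ lives on $M$, so the pushforward is along the embedding $g$, not along the projection). Second, you correctly flag that the sphere $S^{2n}$ must replace $\mathbb{R}^{2n}$ for cocompactness, which the paper also does, so that part of your plan is fine; but be aware that this forces you to track how the one-point compactification interacts with the extension-by-zero maps underlying the Gysin construction.
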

\begin{proof}
Examining Definitions \ref{def phi} and \ref{def psi}, one sees that $\phi\circ\psi$ is given, at the level of geometric cycles, by
$$
[M,E,f]\mapsto f_![E]\mapsto
\begin{cases}
	[X,f_![E],\textnormal{id}],&\textnormal{ if }\dim M=\dim X\mod{2},\\
	[X\times S^1,c_!(f_![E]),\textnormal{pr}_1],&\textnormal{ otherwise,}
\end{cases}
$$
where the map $c$ was defined in \eqref{eq c}. Let $i\colon M\to\mathbb{R}^{2n}$ be a $G$-equivariant embedding for some $n$, and let $j\colon X\to\mathbb{R}^{2n}\times X$ be the zero section. Upon compactifying $\mathbb{R}^{2n}$, $f$ factors as
\begin{equation}
\begin{tikzcd}
 & S^{2n}\times X\arrow{d}{\textnormal{pr}_2}\\
M\arrow{ur}{i\times f}\arrow{r}{f}&X,
\end{tikzcd}
\end{equation}
where $\textnormal{pr}_2$ is the projection onto the second factor, and $j$ becomes an embedding $X\to S^{2n}\times X$. 

Suppose first that $\dim M=\dim X\mod{2}$. Then it suffices to prove that any geometric cycle of the form $(M,E,f)$ is equivalent to $(X,f_![E],\textnormal{id})$. By Lemma \ref{lem gysin modification} applied to the embedding $i\times f$, we have
\begin{equation}
\label{eq mod 1}
[M,E,f]=[S^{2n}\times X,(i\times f)_![E],\textnormal{pr}_2]\in K_*^G(X).	
\end{equation}
Meanwhile, Lemma \ref{lem gysin modification} applied to $j$, together with functoriality of the Gysin map, yields
\begin{align}
\label{eq mod 2}
[X,f_![E],\textnormal{id}]&=[X,f_![E],\textnormal{pr}_2\circ j]\nonumber\\
&=[S^{2n}\times X,j_!(f_![E]),\textnormal{pr}_2]\nonumber\\
&=[S^{2n}\times X,(j\circ f)_![E],\textnormal{pr}_2].
\end{align}
Finally, the maps $i\times f$ and $j\circ f$ are $G$-homotopic through
\begin{align*}
F\colon M\times I&\to\mathbb{R}^{2n}\times X\hookrightarrow S^{2n}\times X,\\
(m,t)&\mapsto((1-t)i(m),f(m)).
\end{align*}
Invariance of the Gysin map under $G$-homotopy now implies that \eqref{eq mod 1} and \eqref{eq mod 2} are equal, and we conclude.

The case of $\dim M\neq\dim X\Mod{2}$ proceeds analogously as follows. Here, we need to show that any geometric cycle of the form $(M,E,f)$ is equivalent to $(X\times S^1,c_!(f_![E]),\textnormal{pr}_1)$, where $c$ was defined in \eqref{eq c}. Similar to \eqref{eq mod 1}, we have
\begin{equation}
\label{eq mod 3}
[M,E,f]=[S^{2n}\times X\times S^1,(i\times(c\circ f))_![E],\textnormal{pr}_{2}]\in K_*^G(X),
\end{equation}
Define
\begin{align*}
\tilde{j}\colon X\times S^1&\to S^{2n}\times X\times S^1,\\
(x,s)&\mapsto (j(x),s).
\end{align*}
Then
\begin{align}
\label{eq mod 4}
[X\times S^1,c_!(f_![E]),\textnormal{pr}_1]&=[X\times S^1,c_!(f_![E]),\textnormal{pr}_2\circ\tilde{j}_!]\nonumber\\
&=[S^{2n}\times X\times S^1,\tilde{j}_!(c_!(f_![E]),\textnormal{pr}_2]\nonumber\\
&=[S^{2n}\times X,(\tilde{j}\circ c\circ f)_![E],\textnormal{pr}_2].
\end{align}
The maps $i\times(c\circ f)$ and $\tilde{j}\circ c\circ f$ are $G$-homotopic through
\begin{align*}
F\colon M\times I&\to\mathbb{R}^{2n}\times X\times S^1\hookrightarrow S^{2n}\times X\times S^1,\\
(m,t)&\mapsto((1-t)i(m),f(m),1),
\end{align*}
and the claim follows from $G$-homotopy invariance of the Gysin map.
\end{proof}
Propositions \ref{prop phi surj} and equation \eqref{prop injective} together imply that $\phi\colon K^i_G(X)\to K^G_{i+\dim X}(X)$ is an isomorphism for $i=0,1$, which establishes Theorem \ref{thm main}.

\vspace{0.3in}
\bibliographystyle{plain}

\bibliography{../BigBibliography/mybib}

\end{document}